\numberwithin{equation}{section} \setlength{\textwidth}{16cm}
\newtheorem{theorem}{Theorem}[section]
\newtheorem{lemma}[theorem]{Lemma}
\theoremstyle{definition}
\theoremstyle{remark}
\newtheorem{remark}[theorem]{Remark}
\newtheorem{case}{Case}
\numberwithin{equation}{section}
\begin{document}
\thanks{The present investigation of the second-named author was supported
by UGC under the grant F.MRP-3977/11 (MRP/UGC-SERO).}
\title[Second Hankel Determinant for bi-Mocanu-convex functions]{Bounds for
the second Hankel determinant of certain bi-univalent functions}
\author{H. Orhan, N. Magesh and J. Yamini}
\address{Department of Mathematics \\
Faculty of Science, Ataturk University \\
25240 Erzurum, Turkey.\\
\texttt{e-mail:} $orhanhalit607@gmail.com$}
\address{Post-Graduate and Research Department of Mathematics,\\
Government Arts College for Men,\\
Krishnagiri 635001, Tamilnadu, India.\\
\texttt{e-mail:} $nmagi\_2000@yahoo.co.in$}
\address{Department of Mathematics,\,\, Govt First Grade College\\
Vijayanagar, Bangalore-560104, Karnataka, India.\\
\texttt{e-mail:} yaminibalaji@gmail.com}

\begin{abstract}
In the present work, we propose to investigate the second Hankel determinant
inequalities for certain class of analytic and bi-univalent functions. Some
interesting applications of the results presented here are also discussed.\ 
\newline
2010 Mathematics Subject Classification: 30C45. \ \newline
\textit{Keywords and Phrases}: Bi-univalent functions, bi-starlike, bi-B\v{a}%
zilevi\v{c}, second Hankel determinant.
\end{abstract}

\maketitle


\section{Introduction}

Let $\mathcal{A}$ denote the class of functions of the form 
\begin{equation}
f(z)=z+\sum\limits_{n=2}^{\infty }a_{n}z^{n}  \label{Int-e1}
\end{equation}%
which are analytic in the open unit disc $\mathbb{U}=\{z:z\in \mathbb{C}\,\,%
\mathrm{and}\,\,|z|<1\}.$ Further, by $\mathcal{S}$ we will show the family
of all functions in $\mathcal{A}$ which are univalent in $\mathbb{U}.$


Some of the important and well-investigated subclasses of the univalent
function class $\mathcal{S}$ include (for example) the class $\mathcal{S}%
^{\ast }(\beta )$ of starlike functions of order $\beta $ in $\mathbb{U}$
and the class $\mathcal{K}(\beta )$ of convex functions of order $\beta $ in 
$\mathbb{U}.$ By definition, we have 
\begin{equation*}
\mathcal{S}^{\ast }(\beta ):=\left\{ f:f\in \mathcal{A}\,\text{\ }\mathrm{and%
}\,\Re \left( \frac{zf^{\prime }(z)}{f(z)}\right) >\beta ;\,\,z\in \mathbb{U}%
;\,\,\,0\leq \beta <1\right\}
\end{equation*}%
and 
\begin{equation*}
\mathcal{K}(\beta ):=\left\{ f:f\in \mathcal{A}\,\text{\ }\mathrm{and}\,\Re
\left( 1+\frac{zf^{\prime \prime }(z)}{f^{\prime }(z)}\right) >\beta
;\,\,z\in \mathbb{U};\,\,\,0\leq \beta <1\right\} .  \label{CV-e}
\end{equation*}

The arithmetic means of some functions and expressions is very frequently
used in mathematics, specially in geometric function theory. Making use of
the arithmetic means Mocanu \cite{Mocanu} introduced the class of $\alpha-$%
convex ($0\leqq \alpha \leqq 1$) functions (later called as Mocanu-convex
functions) as follows: 
\begin{equation*}
\mathcal{M}(\alpha):= \left \{ f : f \in \mathcal{S} \, \mathrm{and}\, \Re
\left ((1-\alpha)\frac{zf^{\prime }(z)}{f(z)} +\alpha \left(1+\frac{%
zf^{\prime \prime }(z)}{f^{\prime }(z)}\right)\right ) > 0 ; \,\, z \in 
\mathbb{U}\right \}.
\end{equation*}

In \cite{SSM-PTM-MOR}, it was shown that if the above analytical criteria
holds for $z\in \mathbb{U},$ then $f$ is in the class of starlike functions $%
\mathcal{S}^*(0)$ for $\alpha$ real and is in the class of convex functions $%
\mathcal{K}(0)$ for $\alpha \geq 1.$ In general, the class of $\alpha-$%
convex functions determines the arithmetic bridge between starlikeness and
convexity.


It is well known that every function $f\in \mathcal{S}$ has an inverse $%
f^{-1},$ defined by 
\begin{equation*}
f^{-1}(f(z))=z \qquad \left(z \in \mathbb{U}\right)
\end{equation*}
and 
\begin{equation*}
f(f^{-1}(w))=w \qquad \left(|w| < r_0(f);\,\, r_0(f) \geq \frac{1}{4}\right),
\end{equation*}
where 
\begin{equation}  \label{Int-f-inver}
f^{-1}(w) = w - a_2w^2 + (2a_2^2-a_3)w^3 - (5a_2^3-5a_2a_3+a_4)w^4+\ldots .
\end{equation}

A function $f \in \mathcal{A}$ is said to be bi-univalent in $\mathbb{U}$ if
both $f(z)$ and $f^{-1}(z)$ are univalent in $\mathbb{U}.$ Let $\sigma$
denote the class of bi-univalent functions in $\mathbb{U}$ given by (\ref%
{Int-e1}).

For $0\leq \beta <1,$ a function $f\in\sigma$ is in the class $%
S^*_{\sigma}(\beta)$ of bi-starlike function of order $\beta,$ or $\mathcal{K%
}_{\sigma, \beta}$ of bi-convex function of order $\beta$ if both $f$ and $%
f^{-1}$ are respectively starlike or convex functions of order $\beta.$
Also, a function $f$ is in the class $\mathcal{M}^{\alpha}_{\Sigma}(\beta)$
of bi-Mocanu convex function of order $\beta$ if both $f$ and $f^{-1}$ are
respectively Mocanu convex function of order $\beta.$ For a brief history
and interesting examples of functions which are in (or which are not in) the
class $\sigma $, together with various other properties of the bi-univalent
function class $\sigma $ one can refer the work of Srivastava et al. \cite%
{HMS-AKM-PG} and references therein. Various subclasses of the bi-univalent
function class $\sigma $ were introduced and non-sharp estimates on the
first two coefficients $|a_{2}|$ and $|a_{3}|$ in the Taylor-Maclaurin
series expansion (\ref{Int-e1}) were found in several recent investigations
(see, for example, \cite{Ali-Ravi-Ma-Mina-class,Caglar-Orhan,Deniz-SHD,
BAF-MKA,Li-Wang,HO-NM-VKB,Peng}). However, the problem to find the
coefficient bounds on $|a_{n}|$ ($n=3,4,\dots $) for functions $f\in \sigma $
is still an open problem.


For integers $n \geq 1$ and $q \geq 1,$ the $q-$th Hankel determinant,
defined as 
\begin{equation*}  \label{HD}
H_{q}(n)=\left|%
\begin{array}{cccc}
a_{n} & a_{n+1} & \cdots & a_{n+q-1} \\ 
a_{n+1} & a_{n+2} & \cdots & a_{n+q-2} \\ 
\vdots & \vdots & \vdots & \vdots \\ 
a_{n+q-1} & a_{n+q-2} & \cdots & a_{n+2q-2}%
\end{array}
\right| \qquad (a_1=1).
\end{equation*}
The Hankel determinant plays an important role in the study of singularities
(see \cite{Dienes-1957}). This is also an important in the study of power
series with integral coefficients \cite{Cantor,Dienes-1957}. The properties
of the Hankel determinants can be found in \cite{Vein-Dale}. 
The Hankel determinants $H_2(1) = a_3 - a_2^2$ and $H_2(2) = a_2a_4 - a_2^3$
are well-known as Fekete-Szeg\"{o} and second Hankel determinant functionals
respectively. Further Fekete and Szeg\"{o} \cite{Fekete-Szego} introduced
the generalized functional $a_3-\delta a_2^2,$ where $\delta$ is some real
number. In 1969, Keogh and Merkes \cite{Keogh-Merkes} discussed the
Fekete-Szeg\"{o} problem for the classes $\mathcal{S}^*$ and $\mathcal{K}.$
Recently, several authors have investigated upper bounds for the Hankel
determinant of functions belonging to various subclasses of univalent
functions \cite{SHD-Ali-2009,VKD-RRT-SHD-2014,
Janteng-2007,SHD-Lee-2013,GMS-NM-SHD,Orhan-FHD-2010} and the references
therein. On the other hand, Zaprawa \cite{Zaprawa,Zaprawa-AAA} extended the
study on Fekete-Szeg\"{o} problem for certain subclasses of bi-univalent
function class $\sigma .$ Following Zaprawa \cite{Zaprawa,Zaprawa-AAA}, the
Fekete-Szeg\"{o} problem for functions belonging to various other subclasses
of bi-univalent functions were considered in \cite%
{Altinkaya,Jay-NM-JY,HO-NM-VKB-Fekete}. Very recently, the upper bounds of $%
H_2(2)$ for the classes $S^*_{\sigma}(\beta)$ and $K_{\sigma}(\beta)$ were
discussed by Deniz et al. \cite{Deniz-SHD}. 

Next we state the following lemmas we shall use to establish the desired
bounds in our study.

\begin{lemma}
\cite{Pom}\label{L-Repart-fun-p} If the function $p\in \mathcal{P}$ is given
by the series 
\begin{equation}  \label{Repart-fun-p}
p(z)= 1+ c_{1} z + c_{2} z^{2} + c_{3} z^{3} + \cdots,
\end{equation}
then the following sharp estimate holds: 
\begin{equation}  \label{cnbound}
|c_{k}|\leq 2, \qquad k= 1,2, \cdots.
\end{equation}
\end{lemma}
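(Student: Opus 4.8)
The plan is to prove the estimate directly from the defining property of the Carath\'eodory class $\mathcal{P}$, namely that $p$ is analytic in $\mathbb{U}$ with $p(0)=1$ and $\Re p(z)>0$ throughout $\mathbb{U}$. I would fix a radius $r\in(0,1)$ and recover the Taylor coefficient $c_{k}$ from the circle $|z|=r$ by means of the Cauchy coefficient integral. The guiding idea is that the hypothesis controls only $\Re p$, so I would rewrite the coefficient integral so that only $\Re p$ appears, and then estimate crudely using positivity of $\Re p$ together with the mean value property of harmonic functions.

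Concretely, for every $k\ge 1$ and $0<r<1$ one has
\[
c_{k}\,r^{k}=\frac{1}{2\pi}\int_{0}^{2\pi}p(re^{i\theta})\,e^{-ik\theta}\,d\theta ,
\]
while the corresponding integral with $\overline{p(re^{i\theta})}$ in place of $p$ vanishes for $k\ge 1$, since the analytic function $p$ carries only nonnegative Fourier modes and hence $\overline{p}$ carries only nonpositive ones. Adding the two identities replaces $p$ by $2\,\Re p$ and yields
\[
c_{k}\,r^{k}=\frac{1}{\pi}\int_{0}^{2\pi}\bigl(\Re p(re^{i\theta})\bigr)\,e^{-ik\theta}\,d\theta .
\]
Taking absolute values and using $\Re p>0$, so that $\bigl|\Re p\cdot e^{-ik\theta}\bigr|=\Re p$, the triangle inequality gives $|c_{k}|\,r^{k}\le \tfrac{1}{\pi}\int_{0}^{2\pi}\Re p(re^{i\theta})\,d\theta$. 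The harmonicity of $\Re p$ makes the right-hand integral equal to $2\,\Re p(0)=2$, whence $|c_{k}|\,r^{k}\le 2$; letting $r\to 1^{-}$ completes the bound.

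Sharpness is exhibited by the half-plane map $p(z)=(1+z)/(1-z)=1+2\sum_{n\ge 1}z^{n}$, for which every $c_{k}=2$, and rotating to $p(z)=(1+\zeta z)/(1-\zeta z)$ with $|\zeta|=1$ shows that equality holds along a whole family. An equally clean route would be the Herglotz representation $p(z)=\int_{0}^{2\pi}\frac{e^{it}+z}{e^{it}-z}\,d\mu(t)$ for a probability measure $\mu$, from which $c_{k}=2\int_{0}^{2\pi}e^{-ikt}\,d\mu(t)$ and hence $|c_{k}|\le 2\int d\mu=2$ at once. I expect no serious obstacle here: the only steps needing a word of justification are the vanishing of the conjugate integral (equivalently, that the negative Fourier coefficients of the analytic $p$ are zero) and the passage to the limit $r\to 1^{-}$, both routine. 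If one adopts the Herglotz route instead, the single nontrivial input is the existence of the representing measure $\mu$, supplied by the Herglotz--Riesz theorem.
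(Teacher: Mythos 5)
Your argument is correct, and it is the classical proof of the Carath\'eodory coefficient bound. The paper itself offers no proof of this lemma --- it is quoted verbatim from Pommerenke's book --- so there is nothing to compare against; but your derivation (Cauchy's coefficient formula, adding the vanishing conjugate integral to replace $p$ by $2\,\Re p$, positivity plus the mean value property to get $|c_k|r^k\le 2\,\Re p(0)=2$, then $r\to 1^-$) is exactly the standard route, and the half-plane map $p(z)=(1+z)/(1-z)$ does establish sharpness. The alternative Herglotz-representation argument you sketch is equally valid and arguably cleaner, at the cost of invoking the Herglotz--Riesz theorem as a black box.
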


\begin{lemma}
\label{L-C2-C3} \cite{Grender} If the function $p\in \mathcal{P}$ is given
by the series (\ref{Repart-fun-p}), then 
\begin{eqnarray*}
2c_2 &=& c_1^2 + x (4-c_1^2)\,\,  \label{L2-c2} \\
4c_3 &=& c_1^3+2c_1(4-c_1^2)x-c_1(4-c_1^2)x^2+2(4-c_1^2)(1-|x|^2)z\,\,
\label{L2-c3}
\end{eqnarray*}
for some $x,$ $z$ with $|x| \leq 1$ and $|z| \leq 1.$
\end{lemma}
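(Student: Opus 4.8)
The plan is to realize every $p\in\mathcal{P}$ through its associated Schwarz function and then read off $c_1,c_2,c_3$ from two successive applications of the Schwarz--Pick lemma. Since $\Re p>0$ and $p(0)=1$, the function $\omega=(p-1)/(p+1)$ is analytic on $\mathbb{U}$, satisfies $\omega(0)=0$ and $|\omega(z)|<1$. Writing $\omega(z)=d_1z+d_2z^2+d_3z^3+\cdots$ and expanding $p=(1+\omega)/(1-\omega)=1+2\omega+2\omega^2+2\omega^3+\cdots$, I would first record the elementary relations $c_1=2d_1$, $c_2=2d_2+2d_1^2$ and $c_3=2d_3+4d_1d_2+2d_1^3$, so that the whole problem reduces to parametrizing the Taylor coefficients $d_1,d_2,d_3$ of the Schwarz function $\omega$.

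Next I would peel off the two free parameters by the Schur algorithm. Because $\omega(0)=0$, the quotient $\omega_1=\omega/z$ is again a self-map of $\mathbb{U}$, with $\omega_1(0)=d_1$ and $\omega_1(z)=d_1+d_2z+d_3z^2+\cdots$. Then I would run one Schur step on $\omega_1$: the Möbius map $w\mapsto(w-d_1)/(1-\overline{d_1}\,w)$ preserves $\overline{\mathbb{U}}$ and sends $d_1$ to $0$, so $\omega_2(z)=\frac{1}{z}\cdot\frac{\omega_1(z)-d_1}{1-\overline{d_1}\,\omega_1(z)}$ is a self-map of $\mathbb{U}$. Setting $x=\omega_2(0)$ (so $|x|\le1$) and inverting, one gets $\omega_1=(d_1+z\omega_2)/(1+\overline{d_1}\,z\omega_2)$; comparing coefficients to second order yields $d_2=(1-|d_1|^2)x$ and $d_3=(1-|d_1|^2)\bigl(\omega_2'(0)-\overline{d_1}\,x^2\bigr)$. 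Finally, since $\omega_2$ is a disc self-map with $\omega_2(0)=x$, the Schwarz--Pick inequality gives $|\omega_2'(0)|\le1-|x|^2$, hence $\omega_2'(0)=(1-|x|^2)z$ for some $|z|\le1$.

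Substituting $d_1=c_1/2$ and $1-|d_1|^2=(4-c_1^2)/4$ (the coefficients being taken in their rotated, real normalization, so that $|c_1|^2=c_1^2$), the expressions for $d_2$ and $d_3$ feed back into the relations of the first paragraph and collapse exactly to $2c_2=c_1^2+(4-c_1^2)x$ and $4c_3=c_1^3+2c_1(4-c_1^2)x-c_1(4-c_1^2)x^2+2(4-c_1^2)(1-|x|^2)z$. I expect the main obstacle to be the bookkeeping in the Schur step---expanding the Möbius composition to second order to isolate $d_3$ correctly, and above all justifying the factor $1-|x|^2$ multiplying $z$, which is precisely the content of Schwarz--Pick at the second level and is what makes the representation sharp. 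An equivalent route is the Carathéodory--Toeplitz characterization used in Grenander--Szegő: the first identity parametrizes the boundary of the region $|2c_2-c_1^2|\le4-|c_1|^2$ cut out by nonnegativity of the $2\times2$ Toeplitz form, while the second arises from the $3\times3$ Toeplitz determinant. Since the algebra there is heavier, I would prefer the Schwarz/Schur derivation sketched above.
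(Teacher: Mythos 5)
The paper offers no proof of this lemma at all --- it is simply quoted from Grenander and Szeg\H{o} (in this explicit form the statement is usually credited to Libera and Z{\l}otkiewicz) --- so there is no internal argument to compare against; your derivation is a genuine, self-contained proof, and it is correct. The bookkeeping checks out: from $p=(1+\omega)/(1-\omega)$ one gets $c_1=2d_1$, $c_2=2d_2+2d_1^2$, $c_3=2d_3+4d_1d_2+2d_1^3$; the Schur step yields $d_2=(1-|d_1|^2)x$ and $d_3=(1-|d_1|^2)\bigl(\omega_2'(0)-\overline{d_1}\,x^2\bigr)$; Schwarz--Pick gives $\omega_2'(0)=(1-|x|^2)z$ with $|z|\le 1$; and substituting $d_1=c_1/2$ reproduces both displayed identities exactly. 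Two small points should be made explicit. First, the identities are written with $c_1^2$ rather than $|c_1|^2$, and your substitution replaces $\overline{d_1}$ by $c_1/2$, so the argument as it stands requires $c_1\in[0,2]$; you do flag this (``rotated, real normalization''), and it is harmless here because the theorem's proof reduces to $c_1=c\in[0,2]$, but since the lemma is stated for general $p\in\mathcal{P}$ the normalization deserves one clean sentence. Second, the Schur step presupposes $|d_1|<1$: when $|d_1|=1$ the maximum principle forces $\omega(z)=d_1z$, hence $4-c_1^2=0$ and both identities hold trivially for arbitrary admissible $x$ and $z$, so the degenerate case costs only a remark. With those two additions your proof is complete and, unlike the paper, actually establishes the lemma rather than citing it.
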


Inspired by the works of \cite{Deniz-SHD,Zaprawa} we consider the following
subclass of the function class $\sigma.$ 

For $0 \leq \alpha \leq 1$ and $0\leq \beta <1,$ a function $f\in \sigma$
given by (\ref{Int-e1}) is said to be in the class $\mathcal{M}%
^{\alpha}_{\sigma}(\beta)$ if the following conditions are satisfied: 
\begin{equation*}
\Re \left ( (1-\alpha)\frac{zf^{\prime }(z)}{f(z)}+\alpha\left(1+\frac{%
zf^{\prime \prime }(z)}{f^{\prime }(z)}\right ) \right)\geq \beta \qquad (z
\in \mathbb{U})
\end{equation*}
and for $g=f^{-1}$ 
\begin{equation*}
\Re \left ( (1-\alpha)\frac{wg^{\prime }(w)}{g(w)}+\alpha\left(1+\frac{%
wg^{\prime \prime }(w)}{g^{\prime }(w)}\right ) \right) \geq \beta \qquad (w
\in \mathbb{U}).
\end{equation*}
The class was introduced and studied by Li and Wang \cite{Li-Wang}, further
the study was extended by Ali et al. \cite{Ali-Ravi-Ma-Mina-class}. In this
paper we shall obtain the functional $H_2(2)$ for functions $f$ belongs to
the class $\mathcal{M}^{\alpha}_{\sigma}(\beta)$ and its special classes.

\section{Bounds for the second Hankel determinant}

We begin this section with the following theorem: 

\begin{theorem}
\label{th-SHD-class} Let $f$ of the form (\ref{Int-e1}) be in $\mathcal{M}%
^{\alpha}_{\sigma}(\beta).$ Then 
\begin{equation*}
|a_2a_4-a_3^2| \leq \left \{ 
\begin{array}{llll}
\frac{4(1-\beta)^2}{3(1+\alpha)^3(1+3\alpha)} \left[4(1-\beta)^2+(1+\alpha)^2%
\right]~; &  &  &  \\ 
\qquad \qquad\qquad\beta \in \left[0, 1- \frac{(1+\alpha)[3(1+3\alpha)+\sqrt{%
9(1+3\alpha)^2-48(1+\alpha) (1+3\alpha)+128(1+2\alpha)^2}]}{16(1+2\alpha)}%
\right] &  &  &  \\ 
\tfrac{(1-\beta)^2}{(1+\alpha)(1+3\alpha)}\tfrac{[(1-\beta)^2(1+3%
\alpha)(13+7\alpha)-12(1-\beta)(1+\alpha)(1+2\alpha)(1+3\alpha)-4(1+%
\alpha)^2(9\alpha^2+8\alpha+2)]} {[16(1-\beta)^2(1+2\alpha)-6(1-\beta)(1+%
\alpha)(1+3\alpha)](1+2\alpha)
+(1+\alpha)^2[3(1+\alpha)(1+3\alpha)-8(1+2\alpha)^2]}~; &  &  &  \\ 
\qquad \qquad\qquad \beta \in \left(1- \frac{(1+\alpha)[3(1+3\alpha)+\sqrt{%
9(1+3\alpha)^2+128(1+2\alpha)^2}]}{32(1+2\alpha)}, 1\right). &  &  & 
\end{array}%
\right.
\end{equation*}
\end{theorem}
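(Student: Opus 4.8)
The plan is to run the standard Carath\'eodory-function argument for bi-univalent coefficient functionals. Since $f\in\mathcal{M}^{\alpha}_{\sigma}(\beta)$, the two defining inequalities mean there exist $p,q\in\mathcal{P}$,
\[
p(z)=1+c_{1}z+c_{2}z^{2}+\cdots,\qquad q(w)=1+d_{1}w+d_{2}w^{2}+\cdots,
\]
with
\[
(1-\alpha)\frac{zf'(z)}{f(z)}+\alpha\left(1+\frac{zf''(z)}{f'(z)}\right)=\beta+(1-\beta)p(z)
\]
and the analogous identity for $g=f^{-1}$ with $q$. First I would expand the operator on the left as a power series. Writing $\Phi[f]:=(1-\alpha)\tfrac{zf'}{f}+\alpha\bigl(1+\tfrac{zf''}{f'}\bigr)$, a direct computation shows that the coefficients of $z$, $z^{2}$, $z^{3}$ in $\Phi[f]$ are $(1+\alpha)a_{2}$, then $(2+4\alpha)a_{3}-(1+3\alpha)a_{2}^{2}$, and then $(3+9\alpha)a_{4}-(3+15\alpha)a_{2}a_{3}+(1+7\alpha)a_{2}^{3}$; equating these with $(1-\beta)c_{k}$ gives three equations. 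Doing the same for $g$, whose coefficients are $-a_{2}$, $2a_{2}^{2}-a_{3}$, $-(5a_{2}^{3}-5a_{2}a_{3}+a_{4})$ by \eqref{Int-f-inver}, gives three more in the $d_{k}$.

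Next I would solve this system. The order-one equations give $a_{2}=(1-\beta)c_{1}/(1+\alpha)$ and $d_{1}=-c_{1}$. Adding and subtracting the order-two equations yields $2(1+\alpha)a_{2}^{2}=(1-\beta)(c_{2}+d_{2})$ and $4(1+2\alpha)(a_{3}-a_{2}^{2})=(1-\beta)(c_{2}-d_{2})$, while subtracting the order-three equations expresses $a_{4}$ through $c_{3}-d_{3}$ together with the products $a_{2}a_{3}$ and $a_{2}^{3}$. Comparing the two available expressions for $a_{2}^{2}$ gives the key constraint $c_{2}+d_{2}=2(1-\beta)c_{1}^{2}/(1+\alpha)$, which eliminates $d_{2}$. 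Inserting all of this into $a_{2}a_{4}-a_{3}^{2}$ produces a polynomial in the Carath\'eodory coefficients with $\alpha,\beta$-dependent coefficients; the bi-univalence is what forces both $p$ and $q$ to lie in $\mathcal{P}$, and is therefore exactly what makes this estimate sharper than the corresponding one for $\mathcal{M}^{\alpha}(\beta)$.

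Then I would invoke Lemma \ref{L-C2-C3}, applying its formulas to $p$ (and to $q$, using $d_{1}=-c_{1}$). After a rotation I may take $c_{1}=c\in[0,2]$ by Lemma \ref{L-Repart-fun-p}. Substituting $2c_{2}=c^{2}+(4-c^{2})x$ and the stated expression for $4c_{3}$, taking moduli, and using the triangle inequality with $|z|\le1$ collapses the estimate to $|a_{2}a_{4}-a_{3}^{2}|\le F(c,t)$ with $t=|x|\in[0,1]$, where $F$ is a polynomial in $c$ and $t$. Because $c_{3}$ enters $a_{2}a_{4}-a_{3}^{2}$ only linearly (through $a_{2}a_{4}$), the term carrying $z$ contributes a single clean summand proportional to $c(4-c^{2})(1-t^{2})$, which keeps $F$ tractable.

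The main obstacle is the final maximization of $F$ over the rectangle $[0,2]\times[0,1]$. I would first analyze the dependence on $t$: whether $F$ increases up to $t=1$ or has an interior maximum is governed by the sign of the coefficient of $t^{2}$, which itself depends on $\alpha$ and $\beta$. After resolving the $t$-maximization I reduce to a one-variable problem in $c$ and locate the maximizer among the endpoints $c=0$, $c=2$ and any interior critical point. The two regimes in the statement correspond precisely to this maximizer migrating between configurations, and the threshold values of $\beta$ (the roots of the quadratics appearing under the radicals) are exactly where the competing candidate values coincide. Carrying out this case analysis, and verifying on each $\beta$-interval which candidate dominates, is where essentially all of the effort lies.
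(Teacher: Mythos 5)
Your overall strategy --- representing the two defining conditions by Carath\'eodory functions $p$ and $q$, extracting the coefficient relations, invoking Lemma \ref{L-C2-C3}, and finishing with a maximization in $c$ and a case split in $\beta$ --- is the same as the paper's, and your coefficient identities (the expansion coefficients $(1+\alpha)a_2$, $(2+4\alpha)a_3-(1+3\alpha)a_2^2$, $(3+9\alpha)a_4-(3+15\alpha)a_2a_3+(1+7\alpha)a_2^3$, together with $d_1=-c_1$ and $2(1+\alpha)a_2^2=(1-\beta)(c_2+d_2)$) are all correct. The genuine gap is in your middle step: the claimed collapse to a single modulus parameter $t=|x|$. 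The quantity $a_2a_4-a_3^2$ involves $d_3$ as well as $c_3$, and $d_3$ is not determined by $c_1,c_2,c_3$ and the sum constraint; it must be parametrized by applying Lemma \ref{L-C2-C3} to $q$, which introduces a second, independent pair of parameters $y$ (tied to $d_2$ via $2d_2=d_1^2+y(4-d_1^2)$) and $w$. So the functional does not reduce to a polynomial in $c$ and $t$ alone.

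If you insist on using the constraint $c_2+d_2=\tfrac{2(1-\beta)}{1+\alpha}c_1^2$ to eliminate $d_2$, then $y$ becomes $y=\kappa(c)-x$ with $\kappa(c)=\bigl(\tfrac{4(1-\beta)}{1+\alpha}-2\bigr)\tfrac{c^2}{4-c^2}$, and the requirement $|y|\le 1$ carves out an $\alpha,\beta$-dependent feasible region in $(c,x)$; the maximization then no longer runs over the simple rectangle $[0,2]\times[0,1]$, and the extremal analysis will not reproduce the displayed right-hand side. The stated bound is obtained precisely by \emph{discarding} the sum constraint: one writes $c_2-d_2=\tfrac{4-c_1^2}{2}(x-y)$ and the corresponding expression for $c_3-d_3$ with $x,y,z,w$ treated as independent points of the closed unit disc, applies the triangle inequality, and maximizes the resulting $F(\gamma_1,\gamma_2)$ (with $\gamma_1=|x|$, $\gamma_2=|y|$) over the unit square --- showing there is no interior maximum and that the boundary maximum occurs at $(1,1)$ --- before carrying out the one-variable analysis in $c$ that produces the two $\beta$-regimes. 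To complete your argument you must either restore this second parameter pair and the two-variable square maximization, or rigorously justify that your constrained one-parameter family both is admissible and dominates, which as written it is not.
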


\begin{proof}
Let $f\in \mathcal{M}^{\alpha}_{\sigma}(\beta).$ Then 
\begin{equation}  \label{SHD-th-p-e1}
(1-\alpha)\frac{zf^{\prime }(z)}{f(z)}+\alpha\left(1+\frac{zf^{\prime \prime
}(z)}{f^{\prime }(z)}\right )=\beta+(1-\beta)p(z)
\end{equation}
and 
\begin{equation}  \label{SHD-th-p-e2}
(1-\alpha)\frac{wg^{\prime }(w)}{g(w)}+\alpha\left(1+\frac{wg^{\prime \prime
}(w)}{g^{\prime }(w)}\right )=\beta+(1-\beta)q(w),
\end{equation}
where $p, q \in \mathcal{P}$ and defined by 
\begin{equation}  \label{SHD-th-p-e3}
p(z)=1+c_1z+c_2z^2+c_3z^3+\dots
\end{equation}
and 
\begin{equation}  \label{SHD-th-p-e4}
q(z)=1+d_1w+d_2w^2+d_3w^3+\dots
\end{equation}
It follows from (\ref{SHD-th-p-e1}), (\ref{SHD-th-p-e2}), (\ref{SHD-th-p-e3}%
) and (\ref{SHD-th-p-e4}) that 
\begin{eqnarray}
(1+\alpha)a_2 &=& (1-\beta)c_1  \label{SHD-th-p-e5} \\
2(1+2\alpha)a_3-(1+3\alpha)a_2^2 &=& (1-\beta)c_2  \label{SHD-th-p-e6} \\
3(1+3\alpha)a_4-3(1+5\alpha)a_2a_3+(1+7\alpha)a_2^3 &=& (1-\beta)c_3
\label{SHD-th-p-e7}
\end{eqnarray}
and 
\begin{eqnarray}
-(1+\alpha)a_2 &=& (1-\beta)d_1  \label{SHD-th-p-e8} \\
(3+5\alpha)a_2^2-(2+4\alpha)a_3 &=& (1-\beta)d_2  \label{SHD-th-p-e9} \\
(12+30\alpha)a_2a_3-(10+22\alpha)a_2^3-(3+9\alpha)a_4 &=& (1-\beta)d_3.
\label{SHD-th-p-e10}
\end{eqnarray}
From (\ref{SHD-th-p-e5}) and (\ref{SHD-th-p-e8}), we find that 
\begin{equation}  \label{SHD-th-p-e11}
c_1 =-d_1
\end{equation}
and 
\begin{equation}  \label{SHD-a2}
a_2=\frac{1-\beta}{1+\alpha}c_1.
\end{equation}
Now, from (\ref{SHD-th-p-e6}), (\ref{SHD-th-p-e9}) and (\ref{SHD-a2}), we
have 
\begin{equation}  \label{SHD-a3}
a_3=\frac{(1-\beta)^2}{(1+\alpha)^2}c_1^2+ \frac{1-\beta}{4+8\alpha}%
(c_2-d_2).
\end{equation}
Also, from (\ref{SHD-th-p-e7}) and (\ref{SHD-th-p-e10}), we find that 
\begin{equation}  \label{SHD-a4}
a_4=\frac{(2+8\alpha)(1-\beta)^3}{(3+9\alpha)(1+\alpha)^3}c_1^3 +\frac{%
5(1-\beta)^2}{8(1+\alpha)(1+2\alpha)}c_1(c_2-d_2) +\frac{1-\beta}{%
6(1+3\alpha)}(c_3-d_3).
\end{equation}
Then, we can establish that 
\begin{eqnarray}  \label{H2of2}
|a_2a_4-a_3^2| =&& \left | \frac{-1}{3}\frac{(1-\beta)^4}{%
(1+\alpha)^3(1+3\alpha)}c_1^4 +\frac{(1-\beta)^3}{8(1+\alpha)^2(1+2\alpha)}%
c_1^2(c_2-d_2)\right.  \notag \\
&& \left. \quad +\frac{(1-\beta)^2}{6(1+\alpha)(1+3\alpha)}c_1(c_3-d_3) -%
\frac{(1-\beta)^2}{16(1+2\alpha)^2}(c_2-d_2)^2 \right |.
\end{eqnarray}

According to Lemma \ref{L-C2-C3} and (\ref{SHD-th-p-e11}), we write

\begin{equation}  \label{c2=d2}
c_2-d_2=\frac{(4-c_1^2)}{2}(x-y)
\end{equation}
and 
\begin{equation}  \label{c3-d3}
c_3-d_3=\frac{c_1^3}{2}+\frac{c_1(4-c_1^2)(x+y)}{2}-\frac{%
c_1(4-c_1^2)(x^2+y^2)}{4}+\frac{(4-c_1^2)[(1-|x|^2)z-(1-|y|^2)w]}{2}
\end{equation}
for some $x,y,z$ and $w$ with $|x|\leq 1,$ $|y|\leq 1,$ $|z|\leq 1$ and $%
|w|\leq 1.$ Using (\ref{c2=d2}) and (\ref{c3-d3}) in \ref{H2of2}, we have

\begin{eqnarray}  \label{H2-2}
|a_2a_4-a_3^2| &=& \left | \frac{-(1-\beta)^4c_1^4}{3(1+\alpha)^3(1+3\alpha)}
+\frac{(1-\beta)^3c_1^2(4-c_1^2)(x-y)}{16(1+\alpha)^2(1+2\alpha)} +\frac{%
(1-\beta)^2c_1}{6(1+\alpha)(1+3\alpha)} \right.  \notag \\
&& \left. \quad \times \left[\frac{c_1^3}{2}+\frac{c_1(4-c_1^2)(x+y)}{2}-%
\frac{c_1(4-c_1^2)(x^2+y^2)}{4}\right. \right.  \notag \\
&& \left. \left. \qquad +\frac{(4-c_1^2)[(1-|x|^2)z-(1-|y|^2)w]}{2}\right]-%
\frac{(1-\beta)^2(4-c_1^2)^2}{64(1+2\alpha)^2}(x-y)^2 \right |  \notag \\
&\leq& \frac{(1-\beta)^4}{3(1+\alpha)^3(1+3\alpha)}c_1^4 +\frac{%
(1-\beta)^2c_1^4}{12(1+\alpha)(1+3\alpha)} + \frac{(1-\beta)^2c_1(4-c_1^2)}{%
6(1+\alpha)(1+3\alpha)}  \notag \\
&& +\left[\frac{(1-\beta)^3c_1^2(4-c_1^2)}{16(1+\alpha)^2(1+2\alpha)} +\frac{%
(1-\beta)^2c_1^2(4-c_1^2)}{12(1+\alpha)(1+3\alpha)}\right](|x|+|y|)  \notag
\\
&& +\left[\frac{(1-\beta)^2c_1^2(4-c_1^2)}{24(1+\alpha)(1+3\alpha)} -\frac{%
(1-\beta)^2c_1(4-c_1^2)}{12(1+\alpha)(1+3\alpha)}\right](|x|^2+|y|^2)  \notag
\\
&& \quad +\frac{(1-\beta)^2(4-c_1^2)^2}{64(1+2\alpha)^2}(|x|+|y|)^2.  \notag
\end{eqnarray}

Since $p\in \mathcal{P},$ so $|c_{1}|\leq 2.$ Letting $c_{1}=c,$ we may
assume without restriction that $c\in \lbrack 0,2].$ Thus, for $\gamma
_{1}=|x|\leq 1$ and $\gamma _{2}=|y|\leq 1,$ we obtain 
\begin{equation*}
|a_{2}a_{4}-a_{3}^{2}|\leq T_{1}+T_{2}(\gamma _{1}+\gamma _{2})+T_{3}(\gamma
_{1}^{2}+\gamma _{2}^{2})+T_{4}(\gamma _{1}+\gamma _{2})^{2}=F(\gamma
_{1},\gamma _{2}),
\end{equation*}%
\begin{eqnarray*}
T_{1} &=&T_{1}(c)=\frac{(1-\beta )^{4}}{3(1+\alpha )^{3}(1+3\alpha )}c^{4}+%
\frac{(1-\beta )^{2}c^{4}}{12(1+\alpha )(1+3\alpha )}+\frac{(1-\beta
)^{2}c(4-c^{2})}{6(1+\alpha )(1+3\alpha )}\geq 0 \\
T_{2} &=&T_{2}(c)=\frac{(1-\beta )^{3}c^{2}(4-c^{2})}{16(1+\alpha
)^{2}(1+2\alpha )}+\frac{(1-\beta )^{2}c^{2}(4-c^{2})}{12(1+\alpha
)(1+3\alpha )}\geq 0 \\
T_{3} &=&T_{3}(c)=\frac{(1-\beta )^{2}c^{2}(4-c^{2})}{24(1+\alpha
)(1+3\alpha )}-\frac{(1-\beta )^{2}c(4-c^{2})}{12(1+\alpha )(1+3\alpha )}%
\leq 0 \\
T_{4} &=&T_{4}(c)=\frac{(1-\beta )^{2}(4-c^{2})^{2}}{64(1+2\alpha )^{2}}\geq
0.
\end{eqnarray*}%
Now we need to maximize $F(\gamma _{1},\gamma _{2})$ in the closed square $%
\mathbb{S}:=\{(\gamma _{1},\gamma _{2}):0\leq \gamma _{1}\leq 1,0\leq \gamma
_{2}\leq 1\}$ for $c\in \lbrack 0,2]$. We must investigate the maximum of $%
F(\gamma _{1},\gamma _{2})$ according to $c\in (0,2),$ $c=0$ and $c=2$
taking into account the sign of $F_{\gamma _{1}\gamma _{1}}F_{\gamma
_{2}\gamma _{2}}-(F_{\gamma _{1}\gamma _{2}})^{2}.$

Firstly, let $c\in (0,2).$ Since $T_{3}<0$ and $T_{3}+2T_{4}>0$ for $c\in
(0,2),$ we conclude that 
\begin{equation*}
F_{\gamma _{1}\gamma _{1}}F_{\gamma _{2}\gamma _{2}}-(F_{\gamma _{1}\gamma
_{2}})^{2}<0.
\end{equation*}

Thus, the function $F$ cannot have a local maximum in the interior of the
square $\mathbb{S}.$ Now, we investigate the maximum of $F$ on the boundary
of the square $\mathbb{S}.$

For $\gamma_1=0$ and $0\leq \gamma_2 \leq 1$ (similarly $\gamma_2=0$ and $%
0\leq \gamma_1\leq1$) we obtain 
\begin{equation*}
F(0,\gamma_2)=G(\gamma_2)=T_1+T_2\gamma_2+(T_3+T_4)\gamma_2^2
\end{equation*}

(i) The case $T_{3}+T_{4}\geq 0:$ In this case for $0<\gamma _{2}<1$ and any
fixed $c$ with $0<c<2,$ it is clear that $G^{\prime }(\gamma
_{2})=2(T_{3}+T_{4})\gamma _{2}+T_{2}>0,$ that is, $G(\gamma _{2})$ is an
increasing function. Hence, for fixed $c\in (0,2),$ the maximum of $G(\gamma
_{2})$ occurs at $\gamma _{2}=1$ and 
\begin{equation*}
\max G(\gamma _{2})=G(1)=T_{1}+T_{2}+T_{3}+T_{4}.
\end{equation*}

(ii) The case $T_{3}+T_{4}<0:$ Since $T_{2}+2(T_{3}+T_{4})\geq 0$ for $%
0<\gamma _{2}<1$ and any fixed $c$ with $0<c<2,$ it is clear that $%
T_{2}+2(T_{3}+T_{4})<2(T_{3}+T_{4})\gamma _{2}+T_{2}<T_{2}$ and so $%
G^{\prime }(\gamma _{2})>0.$ Hence for fixed $c\in (0,2),$ the maximum of $%
G(\gamma _{2})$ occurs at $\gamma _{2}=1$ and

Also for $c=2$ we obtain 
\begin{equation}  \label{F}
F(\gamma_1, \gamma_2) = \frac{4(1-\beta)^2}{3(1+\alpha)^3(1+3\alpha)} \left[%
4(1-\beta)^2+(1+\alpha)^2\right].
\end{equation}

Taking into account the value (\ref{F}) and the cases $i$ and $ii,$ for $%
0\leq \gamma_2 <1$ and any fixed $c$ with $0\leq c \leq 2,$ 
\begin{equation*}
\max G(\gamma_2) = G(1) = T_1+T_2+T_3+T_4.
\end{equation*}

For $\gamma_1=1$ and $0\leq \gamma_2 \leq 1$ (similarly $\gamma_2=1$ and $%
0\leq \gamma_1 \leq 1$), we obtain 
\begin{equation*}
F(1,\gamma_2)=H(\gamma_2)=(T_3+T_4)\gamma_2^2 + (T_2+2T_4)\gamma_2 + T_1 +
T_2 + T_3 + T_4.
\end{equation*}

Similarly, to the above cases of $T_3+T_4,$ we get that 
\begin{equation*}
\max H(\gamma_2) = H(1) = T_1+2T_2+2T_3+4T_4.
\end{equation*}

Since $G(1)\leq H(1)$ for $c\in (0,2),$ $\max F(\gamma _{1},\gamma
_{2})=F(1,1)$ on the boundary of the square $\mathbb{S}.$ Thus the maximum
of $F$ occurs at $\gamma _{1}=1$ and $\gamma _{2}=1$ in the closed square $%
\mathbb{S}.$

Let $K:(0,2)\rightarrow \mathbb{R}$ 
\begin{equation}
K(c)=\max F(\gamma _{1},\gamma _{2})=F(1,1)=T_{1}+2T_{2}+2T_{3}+4T_{4}.
\label{K}
\end{equation}

Substituting the values of $T_1,$ $T_2,$ $T_3$ and $T_4$ in the function $K$
defined by (\ref{K}), yields 
\begin{align*}
K(c) &= \frac{(1-\beta)^2}{48(1+\alpha)^3(1+2\alpha)^2(1+3\alpha)}\left\{%
\left[ 16(1-\beta)^2(1+2\alpha)^2 \right.\right. \\
& \left.\left.\qquad -6(1-\beta)(1+\alpha)(1+2\alpha)(1+3\alpha)
-8(1+\alpha)^2(1+2\alpha)^2+3(1+\alpha)^3(1+3\alpha)\right] c^4 \right. \\
& \left. \qquad + 24(1+\alpha)\left[(1-\beta)(1+2\alpha)(1+3\alpha)
+2(1+\alpha)(1+2\alpha)^2-(1+\alpha)^2(1+3\alpha)\right] c^2 \right. \\
& \qquad \left.+ 48(1+\alpha)^3(1+3\alpha)\right\}.
\end{align*}

Assume that $K(c)$ has a maximum value in an interior of $c\in (0,2),$ by
elementary calculation, we find 
\begin{align*}
K^{\prime }(c)& =\frac{(1-\beta )^{2}}{12(1+\alpha )^{3}(1+2\alpha
)^{2}(1+3\alpha )}\left\{ \left[ 16(1-\beta )^{2}(1+2\alpha )^{2}\right.
\right. \\
& \left. \qquad -6(1-\beta )(1+\alpha )(1+2\alpha )(1+3\alpha )-8(1+\alpha
)^{2}(1+2\alpha )^{2}+3(1+\alpha )^{3}(1+3\alpha )\right] c^{3} \\
& \qquad +12(1+\alpha )\left. \left[ (1-\beta )(1+2\alpha )(1+3\alpha
)+2(1+\alpha )(1+2\alpha )^{2}-(1+\alpha )^{2}(1+3\alpha )\right] c\right\} .
\end{align*}

After some calculations we concluded following cases:

\begin{case}
Let

\begin{equation*}
[16(1-\beta)^2(1+2\alpha)-6(1-\beta)(1+\alpha)(1+3\alpha)](1+2\alpha)
+(1+\alpha)^2[3(1+\alpha)(1+3\alpha)-8(1+2\alpha)^2]\geq 0,
\end{equation*}

\noindent that is,

\begin{equation*}
\beta \in \left[ 0,1-\frac{(1+\alpha )[3(1+3\alpha )+\sqrt{9(1+3\alpha
)^{2}-48(1+\alpha )(1+3\alpha )+128(1+2\alpha )^{2}}]}{16(1+2\alpha )}\right]
.
\end{equation*}%
Therefore $K^{\prime }(c)>0$ for $c\in (0,2).$ Since $K$ is an increasing
function in the interval $(0,2),$ maximum point of $K$ must be on the
boundary of $c\in (0,2),$ that is, $c=2.$ Thus, we have 
\begin{equation*}
\max\limits_{0<c<2}K(c)=K(2)=\frac{4(1-\beta )^{2}}{3(1+\alpha
)^{3}(1+3\alpha )}\left[ 4(1-\beta )^{2}+(1+\alpha )^{2}\right] .
\end{equation*}
\end{case}

\begin{case}
Let

\begin{equation*}
[16(1-\beta)^2(1+2\alpha)-6(1-\beta)(1+\alpha)(1+3\alpha)](1+2\alpha)
+(1+\alpha)^2[3(1+\alpha)(1+3\alpha)-8(1+2\alpha)^2] < 0,
\end{equation*}

\noindent that is,

\begin{equation*}
\beta \in \left[1- \frac{(1+\alpha)[3(1+3\alpha)+\sqrt{9(1+3\alpha)^2-48(1+%
\alpha)(1+3\alpha)+128(1+2\alpha)^2}]}{16(1+2\alpha)}, 1\right].
\end{equation*}

Then $K^{\prime }(c)=0$ implies the real critical point $c_{0_1}=0$ or 
\begin{equation*}
c_{0_2} = \sqrt{\tfrac{-12(1+\alpha)[(1-\beta)(1+2\alpha)(1+3\alpha)+2(1+%
\alpha)(1+2\alpha)^2-(1+\alpha)^2(1+3\alpha)]} {[16(1-\beta)^2(1+2%
\alpha)-6(1-\beta)(1+\alpha)(1+3\alpha)](1+2\alpha)
+(1+\alpha)^2[3(1+\alpha)(1+3\alpha)-8(1+2\alpha)^2]}} .
\end{equation*}
When 
\begin{equation*}
\beta \in \left(1- \tfrac{(1+\alpha)[3(1+3\alpha)+\sqrt{9(1+3\alpha)^2-48(1+%
\alpha)(1+3\alpha)+128(1+2\alpha)^2}]}{16(1+2\alpha)} \right., \left.1- 
\tfrac{(1+\alpha)[3(1+3\alpha)+\sqrt{9(1+3\alpha)^2+128(1+2\alpha)^2}]}{%
32(1+2\alpha)}\right],
\end{equation*}

we observe that $c_{0_2}\geq 2,$ that is, $c_{0_2}$ is out of the interval $%
(0,2).$ Therefore, the maximum value of $K(c)$ occurs at $c_{0_1}=0$ or $%
c=c_{0_2}$ which contradicts our assumption of having the maximum value at
the interior point of $c \in [0,2].$

When $\beta \in \left(1- \frac{(1+\alpha)[3(1+3\alpha)+\sqrt{%
9(1+3\alpha)^2+128(1+2\alpha)^2}]}{32(1+2\alpha)}, 1\right),$ we observe
that $c_{0_2}< 2,$ that is, $c_{0_2}$ is an interior of the interval $[0,2].$
Since $K^{\prime \prime }(c_{0_2})<0,$ the maximum value of $K(c)$ occurs at 
$c=c_{0_2}.$ Thus, we have 
\begin{eqnarray*}
\max\limits_{0 \leq c \leq 2} K(c) &=& K(c_{0_2}) \\
&=& \tfrac{(1-\beta)^2}{(1+\alpha)(1+3\alpha)}\tfrac{[(1-\beta)^2(1+3%
\alpha)(13+7\alpha)-12(1-\beta)(1+\alpha)(1+2\alpha)(1+3\alpha)-4(1+%
\alpha)^2(9\alpha^2+8\alpha+2)]} {[16(1-\beta)^2(1+2\alpha)-6(1-\beta)(1+%
\alpha)(1+3\alpha)](1+2\alpha)
+(1+\alpha)^2[3(1+\alpha)(1+3\alpha)-8(1+2\alpha)^2]}.
\end{eqnarray*}
\end{case}

This completes the proof.
\end{proof}

\begin{remark}
For $\alpha=0$ and $\alpha=1,$ Theorem \ref{th-SHD-class} would reduce to a
known results in \cite[Theorem 2.1, Theorem 2.3]{Deniz-SHD}.
\end{remark}


\end{document}